\documentclass[reqno]{amsart}

\usepackage[utf8]{inputenc}
\usepackage[T1]{fontenc}
\usepackage{amsmath,amsfonts,amssymb}
\usepackage{color}
\usepackage{textcmds}
\usepackage{subcaption}
\usepackage[european]{circuitikz}
\usepackage{pdflscape}
\usepackage{hyperref}

\usepackage{multirow}
\usepackage{multicol}
\usepackage{bm}
\usepackage{enumitem}
\usepackage{graphicx} 
\usepackage{booktabs} 
\usepackage{makecell} 
\usepackage{mathtools}
\usepackage{csquotes}
\usepackage{tabularx}
\usepackage{nicefrac}
\usepackage{bbm}
\usepackage{mathrsfs}
\usepackage{pgfplots}
\pgfplotsset{compat=1.18} 
\usepgfplotslibrary{groupplots} 

\theoremstyle{plain}
\newtheorem{theorem}{Theorem}
\newtheorem{lemma}[theorem]{Lemma}
\newtheorem{definition}[theorem]{Definition}
\newtheorem{remark}[theorem]{Remark}
\newtheorem{example}[theorem]{Example}

\begin{document}

\title[Fourth-Order Energy-consistent AVF-DGM for Dissipative Hamiltonian Systems]{Fourth-Order Energy-Consistent Average Vector Field  \\ Discrete Gradient Methods for \\ Dissipative Hamiltonian Systems}

\author[Lautwein et al.]{Lucas Lautwein$^{1}$ \and Nicole Marheineke$^{1,\star}$ \and H{\r{a}}kon Noren Myhr$^{2}$ \and Kevin Schäfers$^{3}$}

\date{\today\\
$^1$ Trier University, Chair Modeling and Numerics, D-54286 Trier, Germany\\
$^2$ Norwegian University of Science and Technology, 7491 Trondheim, Norway \\
$^3$ University of Wuppertal, School of Mathematics and Natural Sciences, Gaußstr.~20, D-42119 Wuppertal, Germany\\
$^\star$ corresponding author, e-mail: marheineke@uni-trier.der, orcid: 0000-0002-5912-3465}

\begin{abstract}
Discrete gradient methods provide a structure-preserving approach to energy-con\-sistent time integration of dissipative Hamiltonian systems. In their conventional formulation, however, they generally attain at most second-order convergence. Extending the perturbation principle previously developed for Poisson systems to the dissipative setting, we derive fourth-order average vector field discrete gradient schemes. A congruence-based completion preserves the positive semi-definiteness of the dissipation matrix, thereby guaranteeing unconditional energy dissipativity for all positive step sizes. This completion concept provides a systematic route to higher-order energy-consistent schemes. For purely dissipative systems, an alternative exponential completion is derived, together with structure-preserving rational approximations. In particular, we characterize Pad\'e approximants that preserve positive semi-definiteness and can be incorporated into energy-associated splitting schemes. Numerical experiments on a damped physical pendulum and a dissipative Fermi--Pasta--Ulam system demonstrate the importance of the positive semi-definite completion for avoiding nonphysical numerical instabilities and capturing the correct qualitative dynamics. They further show that the proposed methods achieve the desired accuracy and energy consistency at a computational cost competitive with existing Galerkin-type approaches. Moreover, the fourth-order schemes outperform the conventional second-order discrete gradient method in computational efficiency over the considered accuracy range, despite their higher per-step cost.
\end{abstract}

\keywords{Discrete gradient methods; dissipative Hamiltonian systems; energy consistency; structure-preserving numerical methods; Pad\'{e} approximation\\
\textit{MSC.} 65P10; 65L05; 37M15\\
\textit{Acknowledgements.}
This work was partially funded by the Deutsche Forschungsgemeinschaft (DFG, German Research Foundation) -- Project-ID 531152215 -- CRC 1701.
The authors thank S{\o}lve Eidnes for helpful discussions during the preparation of this manuscript. \\
\textit{Declaration of the use of AI.}
The authors used generative AI tools, specifically ChatGPT5.6 Sol and Claude Fable~5, to formalize the idea of a structure-preserving higher-order correction in terms of the congruence-based completion. The authors reviewed and edited the generated suggestions and take full responsibility for the content of this article.}

\maketitle

\section{Introduction}
\label{sec:introduction}

This work concerns the numerical time integration of dissipative Hamiltonian systems. A characteristic property of such systems is the non-increasing behavior of the Hamiltonian along solution trajectories. We consider time-invariant systems
\begin{equation} \label{eq:system}
    \dot{x} = [J-R] \, \nabla \mathcal{H}(x), \quad x(0)=x_0, \qquad \qquad J=-J^\top, \quad R=R^\top \succeq 0,
\end{equation}
for $t\in [0,T]$, whose dynamics satisfy the dissipation equality 
\begin{equation} \label{eq:dissipation}
    \tfrac{\mathrm{d}}{\mathrm{d}t}\mathcal{H}(x(t))= -\nabla \mathcal{H}(x(t))^\top \, R  \, \nabla \mathcal{H}(x(t)) \leq 0.
   \end{equation}
Preserving this energy-dissipation property at the discrete level is important for reliable simulations. We particularly aim at higher-order energy-consistent numerical methods in the spirit of \cite{moench2026a} that reproduce the dissipative behavior of general Hamiltonians $\mathcal{H}$ and recover the energy-conserving identity for $R=0$ exactly, without imposing any restriction on the time step. 

For quadratic Hamiltonians, high-order energy-consistent schemes are well established through Gauss collocation and symplectic Runge--Kutta methods, see, e.g., \cite{hairer2006, kotyczka2019}. 
For general non-quadratic Hamiltonians, discrete gradient methods (DGM) provide a natural framework for structure-preser\-ving time discretizations, since the defining relation of a discrete gradient directly translates the continuous energy identity into a corresponding discrete one \cite{budd1999,gonzalez1996,kinon2026,mclachlan1999}. Standard discrete gradient methods, however, generally attain at most second-order convergence; a prominent example is the averaged vector field (AVF) method \cite{quispel2008}. Other second-order discrete gradients, besides the average vector field (mean value) discrete gradient \cite{harten1983, mclachlan1999}, include the Gonzalez (midpoint) discrete gradient \cite{gonzalez1996} and the symmetrized Itoh-Abe discrete gradient \cite{eidnes2022, itoh1988}. Higher-order convergence can be achieved by Petrov--Galerkin methods \cite{egger2021, giesselmann2024}.  In these methods the energy identity is generally retained only in an integral form and does not necessarily imply unconditional energy dissipativity.  Discrete gradient methods and average vector field collocation methods may be viewed as special instances of the Galerkin approach in \cite{egger2021}; see also the Galerkin approaches in \cite{li2016, wang2018}.
An alternative route towards higher-order (modified) discrete gradient methods was developed in \cite{eidnes2022} for Poisson systems, corresponding to the conservative case $R=0$. The basic idea is to perturb the structure matrix according to $\widetilde{J}=J+\mathcal{O}(h^2)$ with step size $h$, such that a conventional discrete gradient method applied to the perturbed system yields a higher-order approximation of the original dynamics. This perturbation overcomes the second-order limitation, while retaining the energy-conserving property. Extending this principle to dissipative Hamiltonian systems is not straightforward, since the higher-order perturbation generally destroys the positive semi-definiteness of the dissipation matrix. Consequently, a direct perturbation of the dissipative structure does not provide an unconditional energy-consistency guarantee.

In this work, we develop fourth-order energy-consistent average vector field discrete gradient methods (AVF-DGM) for dissipative Hamiltonian systems. In particular, we extend the perturbation principle of \cite{eidnes2022} to the dissipative setting and combine it with a congruence-based completion of the perturbed dissipation matrix. The completion is the key ingredient for incorporating the higher-order correction while preserving symmetry and positive semi-definiteness, thereby guaranteeing energy consistency.  More generally, the introduced concept of congruence-based completion provides a systematic framework for the construction of higher-order energy-consistent schemes and, in particular, allows the derivation to be extended directly to arbitrary orders $p>4$. An important special case is the purely dissipative setting $J=0$. Such systems arise naturally as subproblems in energy-consistent splitting and decomposition approaches for port-Hamiltonian systems, including fourth- and higher-order commutator-based splittings for linear \cite{moench2025} and certain nonlinear systems \cite{moench2026} as well as energy-associated decompositions \cite{bartel2025, moench2026a}. In the absence of the skew-symmetric structure matrix $J$, the dissipation correction admits alternative formulations. We therefore introduce an exponential-based modification of the dissipation matrix and investigate structure-preserving rational approximations of the resulting matrix exponential. In particular, we establish conditions under which Pad\'{e} approximants  preserve non-negativity on the relevant spectral interval and study their performance as numerical integrators for dissipative subflows in energy-consistent splitting schemes. The numerical experiments confirm the accuracy and unconditional dissipativity of the proposed methods and compare their computational efficiency with an established Petrov--Galerkin scheme.

The remainder of the paper is organized as follows. Section~\ref{sec:DGM} develops the fourth-order modification of the AVF-DGM and the corresponding structure-preserving dissipative completions, including the exponential and rational formulations for purely dissipative systems.  The numerical experiments for a damped physical pendulum and a dissipative Fermi--Pasta--Ulam system demonstrate energy consistency, fourth-order convergence, and computational efficiency in Section~\ref{sec:NumericalResults}. Finally, Section~\ref{sec:Conclusion} summarizes the main results and discusses possible extensions.

\section{Fourth-Order Unconditionally Dissipative Discrete Gradient Methods} \label{sec:DGM}

In this section, we develop fourth-order energy-consistent modified AVF-DGM for dissipative Hamiltonian systems. We first derive the order-improving perturbation and then construct a positive semi-definite congruence-based completion of the dissipative part. For the purely dissipative case $J=0$, the construction simplifies and admits an alternative formulation based on a matrix exponential. We establish structure-preserving rational approximations of the exponential, in particular Pad\'{e} approximants with even numerator degree, providing an efficient implementation.

\subsection{Standard AVF-DGM}
Consider a dissipative Hamiltonian system \eqref{eq:system} with $\mathcal{H}\in \mathcal{C}^2(\mathcal{X},\mathbb{R})$, $\mathcal{X}\subset \mathbb{R}^d$, and continuous solution $x(t)$, $t\in[0,T]$, $T>0$.
A discrete gradient method $\Phi$ applied to \eqref{eq:system} advances a given value $x_n \approx x(t_n)$ to a numerical approximation $x_{n+1} = \Phi_h(x_n) \approx x(t_n + h)$ with step size $h>0$ according to, $x_0=x(t_0)$, $t_0=0$, 
\begin{equation}\label{eq:DGM}
    \frac{x_{n+1} - x_n}{h} = [J-R]\,  \overline{\nabla} \mathcal{H}(x_n,x_{n+1}),
\end{equation}
where $\overline{\nabla} \mathcal{H} \colon \mathcal{X} \times \mathcal{X} \to \mathbb{R}^d$ denotes the discrete gradient satisfying, for all $x,y \in \mathcal{X}$,
\begin{itemize}
    \item[(i)] $\overline{\nabla} \mathcal{H}(x,x) = \nabla \mathcal{H}(x)$ \hfill (consistency),
    \item[(ii)] $\overline{\nabla} \mathcal{H}(x,y)^\top (y-x) = \mathcal{H}(y) - \mathcal{H}(x)$ \hfill (discrete chain rule).
\end{itemize}

\emph{\textbf{Energy consistency}} follows directly from these defining properties. In fact, the discrete dissipation inequality holds, 
\begin{align*}
        \mathcal{H}(x_{n+1}) - \mathcal{H}(x_n) 
        &= -h \overline{\nabla} \mathcal{H}(x_n,x_{n+1})^\top \, R \, \overline{\nabla} \mathcal{H}(x_n,x_{n+1}) \le 0, \quad \text{ for all } h>0,\\
        & \quad \text{with equality if } R=0. 
\end{align*}
If $x^*$ is an equilibrium of \eqref{eq:system}, $\mathcal{H} - \mathcal{H}(x^*)$ is a Lyapunov function, and  $x^*$ is preserved by the scheme $\Phi$, then energy consistency implies unconditional dissipativity with respect to the Hamiltonian and, consequently, preservation of the Lyapunov stability of $x^*$.

\emph{\textbf{2nd-order convergence}} is the highest order that can generally be attained by DGM for nonlinear  
$\mathcal{H}$, owing to the two-point structure of the discrete gradient, i.e., $\|x_n-x(t_n)\|=\mathcal{O}(h^2)$. In this work, we focus on the average vector field discrete gradient \cite{harten1983, mclachlan1999} 
 \begin{equation}\label{eq:AVF_DG}
        \overline{\nabla}_{\mathrm{AVF}}\mathcal{H}(x,y) = \int_0^1\nabla\mathcal{H}((1-\xi)x+\xi y) \, \mathrm{d}\xi,
\end{equation}
which gives rise to the second-order AVF-DGM. For sufficiently smooth $\mathcal{H}$, its local truncation error (consistency error) is, by Taylor expansion, given by
\begin{align}\label{eq:DGM_truncation-error}
 x(t_0 + h) - x_1 
 &= - \frac{h^3}{12} \, [J-R]\nabla^2\mathcal H(x_0)[J-R]\nabla^2\mathcal H(x_0)[J-R]\,\nabla\mathcal H(x_0) + \mathcal{O}(h^4)\\
 &= - \frac{h^3}{12} \,[J-R]\nabla^2\mathcal H(\bar x) [J-R]\nabla^2\mathcal H(\bar x)[J-R]\, \nabla\mathcal H(\bar x) + \mathcal{O}(h^5), \quad \bar x=\tfrac{1}{2}(x_0+x_1).  \nonumber
    \end{align}
with pointwise symmetric Hessian $\nabla^2 \mathcal{H}$. Expressing the leading local error in terms of the midpoint of the numerical step eliminates the fourth-order contribution from the expansion. This midpoint representation is crucial for the derivation of higher-order schemes. As the AVF-DGM is a one-step method, its convergence order follows from its consistency order under the stated regularity and standard local solvability assumptions \cite{hairer2006}.

\subsection{Order increase via perturbation principle}\label{sec:modified-DGM-general}

Aiming at a higher-order scheme, we follow the perturbation principle from \cite{eidnes2022} and perturb the system matrix $A=J-R$ of \eqref{eq:system} rather than changing the discrete gradient framework. We consider the modified AVF-DGM
\begin{align}\label{eq:modified-AVF}
\frac{x_{n+1}-x_n}{h}
&=\overline A(x_n,x_{n+1},h)\,\overline{\nabla}_{\mathrm{AVF}}\mathcal{H}(x_n,x_{n+1}), \\
&\text{ with } \lim_{\substack{y\rightarrow x\\ h\rightarrow0}}\overline A(x,y,h)=A, \nonumber
\end{align}
and choose a modification $\overline A$ such that the third-order term in the consistency error (leading-order local truncation error) of the classical AVF-DGM, cf.\ \eqref{eq:DGM_truncation-error}, is cancelled. This leads to
\begin{equation}\label{eq:Atilde}
\overline A = \widetilde A, \qquad \widetilde A(x,y,h)=A-\frac{h^2}{12}A \,\nabla^2 \mathcal{H}(\tfrac{x+y}{2}) \,A\,\nabla^2 \mathcal{H}(\tfrac{x+y}{2})\, A. 
\end{equation}
As the AVF discrete gradient is second-order accurate with respect to the midpoint, $\overline{\nabla}_{\mathrm{AVF}}\mathcal{H}(x,y)=\nabla \mathcal{H}(\tfrac{x+y}{2})+\mathcal{O}(\|y-x\|^2)$, the resulting scheme \eqref{eq:modified-AVF}-\eqref{eq:Atilde} yields a fourth-order approximation to the solution of the dissipative Hamiltonian system~\eqref{eq:system}. Moreover, the midpoint evaluation preserves the symmetry of the underlying AVF-DGM \eqref{eq:DGM}-\eqref{eq:AVF_DG}.

\begin{remark}
The perturbation principle was introduced in \cite{eidnes2022} for modified discrete gradient methods applied to energy-conservative systems, i.e., with $R=0$. There, a B-series analysis is used to derive order conditions in a more general setting, including general second-order discrete gradients and state-dependent structure matrices. In the present work, we derive the required correction directly from a Taylor expansion of the local truncation error for dissipative Hamiltonian systems. For constant $J$ and $R$, the resulting order condition takes the same form with $A=J-R$.
\end{remark}

\subsection{Energy consistency through congruence-based completion}\label{sec:completion}
To ensure energy con\-sis\-tency of the modified AVF-DGM \eqref{eq:modified-AVF}, we require the modification to retain the decomposition into a pointwise skew-symmetric structure matrix and a pointwise symmetric positive semi-definite dissipation matrix function, 
$$\overline A=\overline J - \overline R, \qquad \overline J(x,y,h)=-\overline J(x,y,h)^\top, \quad \overline R(x,y,h)=\overline R(x,y,h)^\top \succeq 0.$$

For the order-improving correction $\widetilde{A}$ in \eqref{eq:Atilde}, substituting $A=J-R$ and separating the skew-symmetric and symmetric contributions yields
\begin{align}\label{eq:Atilde-decomposition} 
\widetilde A =\widetilde J-\widetilde R, \qquad \text{ with } \, \widetilde J(x,y,h) &= J-\frac{h^2}{12}(T_0(\tfrac{x+y}{2})+T_2(\tfrac{x+y}{2})), \\ 
\widetilde R(x,y,h) &= R-\frac{h^2}{12}(T_1(\tfrac{x+y}{2})+T_3(\tfrac{x+y}{2})),\nonumber
\end{align}
where the higher-order terms $T_i$ are indexed according to the number of occurrences of $R$, 
\begin{align*}
T_0(x) &=J\nabla^2\mathcal{H}(x)J\nabla^2\mathcal{H}(x)J,
\\
T_1(x)&= R\nabla^2\mathcal{H}(x)J\nabla^2\mathcal{H}(x)J +J\nabla^2\mathcal{H}(x)R \nabla^2\mathcal{H}(x)J +J\nabla^2\mathcal{H}(x)J \nabla^2\mathcal{H}(x)R,
\\
T_2(x)&=R\nabla^2\mathcal{H}(x)R\nabla^2\mathcal{H}(x)J+R\nabla^2\mathcal{H}(x)J\nabla^2\mathcal{H}(x)R+J\nabla^2\mathcal{H}(x)R\nabla^2\mathcal{H}(x)R,
\\
T_3(x) &=R\nabla^2\mathcal{H}(x)R\nabla^2\mathcal{H}(x)R.
\end{align*}
By construction, $\widetilde J$ is pointwise skew-symmetric and consistent with $J$, i.e., $\lim_{y\rightarrow x,h\rightarrow0}\widetilde J(x,y,h)=J$. Likewise, $\widetilde R$ is pointwise symmetric and consistent with $R$. In contrast to $R$, however, $\widetilde R(x,y,h)$ is not necessarily positive semi-definite. This loss of structure is the central difficulty in extending the higher-order perturbation approach to dissipative Hamiltonian systems, since the resulting scheme is no longer unconditionally dissipative.  We restore this property by means of a suitable completion.

\begin{definition}[Congruence-based completion] \label{def:congruence-completion}
Consider a dissipative Hamiltonian system \eqref{eq:system} and the modified AVF-DGM \eqref{eq:modified-AVF} with \eqref{eq:Atilde-decomposition}.
A matrix-valued function $\overline R$ is called a \emph{fourth-order congruence-based completion} of $\widetilde R$ if $\overline R=\widetilde R+O(h^4)$ and
\begin{equation}\label{eq:congruence-completion}
\overline R(x,y,h)=\sum_{j=1}^m \alpha_j \ \Psi_j(x,y,h)^\top R \ \Psi_j(x,y,h), \qquad \alpha_j\geq0,
\end{equation}
for suitable matrix functions $\Psi_j$.
\end{definition}

\begin{lemma}\label{lem:congruence-structure}
Every congruence-based completion \eqref{eq:congruence-completion} is pointwise symmetric positive semi-definite.
\end{lemma}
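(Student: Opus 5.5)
The argument is a direct pointwise check of the two properties, using only $R=R^\top\succeq 0$ from \eqref{eq:system} and $\alpha_j\ge 0$ from \eqref{eq:congruence-completion}. Fix arbitrary arguments $(x,y,h)$ and abbreviate $\Psi_j=\Psi_j(x,y,h)\in\mathbb{R}^{d\times d}$. We may assume each $\Psi_j$ maps into $\mathbb{R}^d$, so that the product $\Psi_j^\top R\,\Psi_j$ is defined. Since the properties are checked at a single point, no regularity of the $\Psi_j$ is needed.

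\emph{Symmetry.} Transposing each summand gives $(\Psi_j^\top R\Psi_j)^\top=\Psi_j^\top R^\top \Psi_j=\Psi_j^\top R\Psi_j$, because $R=R^\top$. A real linear combination of symmetric matrices is symmetric, so $\overline R(x,y,h)^\top=\overline R(x,y,h)$.

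\emph{Positive semi-definiteness.} For arbitrary $v\in\mathbb{R}^d$, set $w_j=\Psi_j v$. Then
\begin{equation*}
v^\top \overline R(x,y,h)\, v=\sum_{j=1}^m \alpha_j\, w_j^\top R\, w_j \ \ge 0,
\end{equation*}
since every $\alpha_j\ge 0$ and every $w_j^\top R w_j\ge 0$ by $R\succeq 0$. As $v$ and $(x,y,h)$ were arbitrary, $\overline R$ is pointwise symmetric positive semi-definite.

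The argument is elementary and has no real obstacle. The one point to state explicitly is that it uses only the congruence form \eqref{eq:congruence-completion}, not the order condition $\overline R=\widetilde R+\mathcal{O}(h^4)$ from Definition~\ref{def:congruence-completion}. Positive semi-definiteness is therefore guaranteed for every $h>0$, not just asymptotically. Combined with the discrete chain rule, this will give unconditional energy dissipativity of the modified scheme.
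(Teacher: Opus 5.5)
Your proposal is correct and follows essentially the same argument as the paper: symmetry from $R=R^\top$, and positive semi-definiteness from $v^\top\overline R v=\sum_j\alpha_j(\Psi_j v)^\top R(\Psi_j v)\ge 0$ using $R\succeq 0$ and $\alpha_j\ge 0$. You are also right that only the congruence form \eqref{eq:congruence-completion} is used, not the order condition, so the property holds for every $h>0$.
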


\begin{proof}
The symmetry of $\overline{R}(x,y,h)$ follows directly from the symmetry of $R$.
Moreover, for any $v\in\mathbb R^d$,
$v^\top\overline Rv = \sum_{j=1}^m \alpha_j (\Psi_jv)^\top R(\Psi_jv) \geq0,$
because $R\succeq0$ and $\alpha_j\geq0$.
Thus, $\overline R(x,y,h)=\overline R(x,y,h)^\top\succeq0$.
\end{proof}
The representation \eqref{eq:congruence-completion} may also be interpreted in terms of Gram matrices; see, e.g., \cite{golub2013}. Using a factorization $R=L^\top L$, we obtain
$\overline R = \sum_{j=1}^m \alpha_j (L\Psi_j)^\top(L\Psi_j)$,
so that $\overline R$ is a non-negative linear combination of Gram matrices. The matrix functions  $\Psi_j$ are subsequently chosen to satisfy the required order conditions.

\begin{theorem} \label{thm:completion}
Consider a dissipative Hamiltonian system \eqref{eq:system} and the modified AVF-DGM \eqref{eq:modified-AVF}.
Let the modification $\overline{A}=\overline{J}-\overline{R}$ be composed of $\overline{J}=\widetilde{J}$ \eqref{eq:Atilde-decomposition} and $\overline R=\sum_{j=1}^m \alpha_j \Psi_j^\top R\Psi_j$,  $\alpha_j\geq0$, from \eqref{eq:congruence-completion}. Suppose that  $\Psi_j(x,y,h)=I+hC_j(x,y)+h^2D_j(x,y)$ and that
\begin{align*}\label{eq:completion-condition-1}
\sum_{j=1}^m\alpha_j &=1, 
& \sum_{j=1}^m\alpha_j (C_j^\top R+RC_j) =0,\\
\sum_{j=1}^m\alpha_j\left(C_j^\top RC_j+D_j^\top R+RD_j\right)\Big|_{(x,y)}&=-\tfrac{1}{12}(T_1+T_3)\Big|_{(x+y)/2},
\quad &\sum_{j=1}^m\alpha_j(C_j^\top RD_j+D_j^\top RC_j)=0.
\end{align*}
Then, the resulting modified AVF-DGM is energy-consistent and of fourth order. 
\end{theorem}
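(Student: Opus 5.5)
The proof splits into two independent parts: energy consistency, which is purely structural, and fourth order, which reduces to comparing the completion with the order-improving perturbation $\widetilde R$.

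\emph{Energy consistency.} By Lemma~\ref{lem:congruence-structure}, $\overline R$ is pointwise symmetric positive semi-definite. By \eqref{eq:Atilde-decomposition}, $\overline J=\widetilde J$ is pointwise skew-symmetric. Write $g=\overline{\nabla}_{\mathrm{AVF}}\mathcal H(x_n,x_{n+1})$. Multiplying \eqref{eq:modified-AVF} from the left by $g^\top$ and using the discrete chain rule (ii), we get
\[
\mathcal H(x_{n+1})-\mathcal H(x_n)=h\,g^\top(\overline J-\overline R)g=-h\,g^\top\overline R g\le 0 \quad \text{for all } h>0 .
\]
For $R=0$ we have $\overline R=0$, so equality holds. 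This is the same argument as for the standard AVF-DGM.

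\emph{Fourth order.} The plan is to show $\overline R=\widetilde R+\mathcal O(h^5)$, or at least $\mathcal O(h^4)$ with the $h^4$ remainder harmless. Then $\overline A=\widetilde A+\mathcal O(h^4)$, and the scheme is a perturbation of the fourth-order scheme \eqref{eq:modified-AVF}--\eqref{eq:Atilde}.

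First, expand $\Psi_j^\top R\Psi_j$ in powers of $h$:
\[
R+h(C_j^\top R+RC_j)+h^2(C_j^\top RC_j+D_j^\top R+RD_j)+h^3(C_j^\top RD_j+D_j^\top RC_j)+h^4D_j^\top RD_j .
\]
Summing with weights $\alpha_j$ and inserting the four assumed conditions, the $h^0$ coefficient is $R$, the $h^1$ and $h^3$ coefficients vanish, and the $h^2$ coefficient is $-\tfrac1{12}(T_1+T_3)$ evaluated at the midpoint. This gives $\overline R=\widetilde R+h^4E$ with $E=\sum_j\alpha_jD_j^\top RD_j$, which is bounded on compact sets assuming $D_j$ continuous. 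Hence the local step satisfies $(x_1-x_0)/h=(\widetilde A-h^4E)\overline\nabla_{\mathrm{AVF}}\mathcal H$.

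Second, compare with the scheme using $\widetilde A$. Both implicit equations are solvable for small $h$ by the standard implicit-function argument, and their solutions differ by $h\cdot\mathcal O(h^4)=\mathcal O(h^5)$ in one step. Since the $\widetilde A$-scheme has local error $\mathcal O(h^5)$ by the construction in Section~\ref{sec:modified-DGM-general}, the new scheme also has local error $\mathcal O(h^5)$. The usual one-step convergence theory then yields global order four.

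\emph{Main obstacle.} The delicate point is justifying that the $\widetilde A$-scheme really has local error $\mathcal O(h^5)$. This uses the midpoint form of \eqref{eq:DGM_truncation-error} together with $\overline\nabla_{\mathrm{AVF}}\mathcal H=\nabla\mathcal H(\bar x)+\mathcal O(h^2)$. Note that the correction term $\tfrac{h^3}{12}A\nabla^2\mathcal H A\nabla^2\mathcal H A\,\overline\nabla_{\mathrm{AVF}}\mathcal H$ then matches the leading error term only up to $\mathcal O(h^5)$. I would state this as already established in Section~\ref{sec:modified-DGM-general}, assuming enough smoothness of $\mathcal H$ (say $\mathcal C^5$) for the Taylor expansions. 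Beyond that, the remaining point is bookkeeping: confirming that $C_j$ and $D_j$ are evaluated consistently at $(x_n,x_{n+1})$, so that the $h^2$ identity holds pointwise along the step.
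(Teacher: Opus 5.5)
Your proposal is correct and follows the paper's proof. You expand $\Psi_j^\top R\Psi_j$ in powers of $h$, use the four conditions to get $\overline R=\widetilde R+h^4\sum_j\alpha_jD_j^\top RD_j$, invoke Lemma~\ref{lem:congruence-structure} for positive semi-definiteness, and inherit fourth order from the $\widetilde A$-scheme of Section~\ref{sec:modified-DGM-general}. Your explicit discrete-chain-rule computation and your implicit-function comparison, which shows that an $\mathcal O(h^4)$ change in $\overline A$ perturbs one step by only $\mathcal O(h^5)$, spell out what the paper leaves implicit.
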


\begin{proof}
Expanding \eqref{eq:congruence-completion} gives
\begin{align*}
\overline R(x,y,h)=\sum_{j=1}^m\alpha_j\Bigl[&R +h(C_j^\top R+RC_j)+h^2(C_j^\top RC_j+D_j^\top R+RD_j)+h^3(C_j^\top RD_j+D_j^\top RC_j)\\
&+h^4D_j^\top RD_j\Bigr] \Big|_{(x,y)}.
\end{align*}
The assumed relations therefore imply
\begin{equation}
\overline R(x,y,h)=R-\frac{h^2}{12}(T_1+T_3)\Big|_{(x+y)/2}+h^4\sum_{j=1}^m\alpha_jD_j^\top RD_j\Big|_{(x,y)}.
\end{equation}
Hence, $\overline R=\widetilde R+O(h^4)$ and is consistent with $R$, as $\lim_{y\rightarrow x,h\rightarrow0}\overline R(x,y,h)=R$. By Lemma~\ref{lem:congruence-structure}, $\overline R$ is pointwise symmetric positive semi-definite. Together with the pointwise skew-symmetry and consistency of $\overline J=\widetilde{J}$ with $J$, the resulting modified AVF-DGM is thus energy-consistent and fourth-order.
\end{proof}

\begin{example}[Fourth-order energy-consistent scheme]\label{ex:c-b_method}
    Consider $m=2$ with $\alpha_1 =\alpha_2= \tfrac{1}{2}$. Choose $C_1= C$, $C_2 = -C$ and $D_1=D_2=D$, where 
    \begin{align*}
        C(x,y)&=\tfrac{1}{2\sqrt{3}} \nabla^2 \mathcal{H}(\tfrac{x+y}{2}) \, J,\\
         D(x,y)&=-\tfrac{1}{12}\nabla^2 \mathcal{H}(\tfrac{x+y}{2}) \, J \, \nabla^2 \mathcal{H}(\tfrac{x+y}{2}) \, J -\tfrac{1}{24}\nabla^2 \mathcal{H}(\tfrac{x+y}{2}) \, R \, \nabla^2 \mathcal{H}(\tfrac{x+y}{2}) \, R.
    \end{align*}
Then,  $\overline{R} = \tfrac{1}{2}\Psi_1^\top R\Psi_1 + \tfrac{1}{2} \Psi_2^\top R\Psi_2$ with $\Psi_{1,2} = I \pm hC +h^2 D$, is a fourth-order congruence-based completion. The resulting energy-consistent AVF-DGM is
    \begin{align}\label{eq:DGM4}
        \frac{x_{n+1}-x_n}{h}&=\overline A(x_n,x_{n+1},h)\overline{\nabla}_{\mathrm{AVF}}\mathcal{H}(x_n,x_{n+1}),\\
        \overline A(x,y,h) &= A-\frac{h^2}{12}A\nabla^2 \mathcal{H}(\tfrac{x+y}{2}) A\nabla^2 \mathcal{H}(\tfrac{x+y}{2})A - h^4 D(x,y)^\top RD(x,y), \quad A=J-R. \nonumber
    \end{align}
  Compared to $\widetilde{A}$ \eqref{eq:Atilde}, the additional $(h^4D^\top RD)$  term in \eqref{eq:DGM4} does not affect fourth-order consistency and is introduced solely to complete the dissipative matrix in a positive semi-definite manner.
 \end{example}

\subsection{Purely dissipative systems}\label{sec:purely-dissipative}

In the special case $J=0$, the dissipative system \eqref{eq:system} reduces to
\begin{equation}\label{eq:purely-dissipative}
\dot{x}=-R\nabla \mathcal{H}(x), \quad x_0(0)=x_0, \qquad R=R^\top \succeq 0.
\end{equation}
Moreover, since $T_0=T_1=T_2=0$, the fourth-order correction $\widetilde{A}$ \eqref{eq:Atilde-decomposition} simplifies to
\begin{equation} \label{eq:Rtilde-pure}
\widetilde A=-\widetilde R, \qquad
\widetilde R(x,y,h) = R-\frac{h^2}{12} R\nabla^2\mathcal{H}(\tfrac{x+y}{2}) R\nabla^2\mathcal{H}(\tfrac{x+y}{2})R
\end{equation}
and allows for alternative completions, such as the construction of an \emph{exponential completion}.

\begin{theorem}
Consider the purely dissipative system \eqref{eq:purely-dissipative}.
The matrix function
\begin{equation} \label{eq:Rbar-exp}
\overline R_{\exp}(x,y,h)
=\exp (Z(\tfrac{x+y}{2},h))R, \qquad Z(x,h)=-\frac{h^2}{12}R\nabla^2\mathcal{H}(x)R\nabla^2\mathcal{H}(x)
\end{equation}
is a fourth-order symmetric positive semi-definite completion of $\widetilde{R}$ \eqref{eq:Rtilde-pure}, and the modified AVF-DGM \eqref{eq:modified-AVF} with $\overline{A}=-\overline{R}_{\exp}$ is energy-consistent and of fourth order.
\end{theorem}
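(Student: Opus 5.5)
Write $H=\nabla^2\mathcal H(\tfrac{x+y}{2})$, which is symmetric, and $Z=-\tfrac{h^2}{12}RHRH$. The proof has three parts: order of approximation, symmetry and positive semi-definiteness of $\overline R_{\exp}$, and then the energy-consistency and order statements, which follow as in Theorem~\ref{thm:completion}.

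\emph{Order.} The power series gives
$\exp(Z)R = R + ZR + \tfrac12 Z^2R+\dots = R-\tfrac{h^2}{12}RHRHR+\mathcal O(h^4)$.
This equals $\widetilde R+\mathcal O(h^4)$ with $\widetilde R$ from \eqref{eq:Rtilde-pure}. Since $\widetilde A=-\widetilde R$ already cancels the $h^3$ term of the local error \eqref{eq:DGM_truncation-error}, an $\mathcal O(h^4)$ change in $\overline A$ only alters the local error at order $h^5$, because it multiplies $h\,\overline\nabla_{\mathrm{AVF}}\mathcal H$. Hence fourth order follows exactly as in Section~\ref{sec:modified-DGM-general}. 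Consistency $\overline R_{\exp}\to R$ as $y\to x$, $h\to0$ is immediate.

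\emph{Symmetry.} The main work is showing that $\exp(Z)R$ is symmetric positive semi-definite, since $Z$ itself is not symmetric. The key identity is $Z^kR = R\,(Z^\top)^k$, which I will prove by induction. The base case is $ZR=-\tfrac{h^2}{12}RHRHR$, which is symmetric because $R$ and $H$ are. Moreover $Z^\top=-\tfrac{h^2}{12}HRHR$, so $RZ^\top = ZR$. The induction step is $Z^{k+1}R=Z(Z^kR)=ZR(Z^\top)^k=R(Z^\top)^{k+1}$. Summing the series gives $\exp(Z)R=R\exp(Z^\top)=(\exp(Z)R)^\top$.

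\emph{Semi-definiteness.} Write $Z=RM$ with $M=-\tfrac{h^2}{12}HRH$, which is symmetric. Then
$\exp(Z)R=\sum_k \tfrac{1}{k!}(RM)^kR$.
Split by parity of $k$ and set $W=\sum_{j}\tfrac{1}{(2j)!}(RM)^j$. For even $k=2j$, the terms $(RM)^{2j}R=(RM)^jR(MR)^j$ sum to $WRW^\top$, which is a congruence of $R$. The odd terms should be handled by the symmetric square-root argument below rather than by pairing.

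A cleaner route is to work with $R^{1/2}$. The identity $(RM)^kR=R^{1/2}(R^{1/2}MR^{1/2})^kR^{1/2}$ holds by direct expansion. Hence
$\exp(Z)R=R^{1/2}\exp(S)R^{1/2}$ with $S=R^{1/2}MR^{1/2}$ symmetric.
Since $\exp(S)$ is symmetric positive definite, $\exp(Z)R=(\exp(S/2)R^{1/2})^\top(\exp(S/2)R^{1/2})\succeq0$. This gives symmetry and semi-definiteness at once, so it replaces the induction and the parity split.

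\emph{Energy consistency.} With $\overline A=-\overline R_{\exp}$ and $\overline R_{\exp}\succeq0$, the discrete chain rule gives
$\mathcal H(x_{n+1})-\mathcal H(x_n)=-h\,\overline\nabla\mathcal H^\top\overline R_{\exp}\overline\nabla\mathcal H\le0$ for all $h>0$.
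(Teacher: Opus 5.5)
Your proof is correct and, in its final form, is the paper's argument: the identity $(RM)^kR=R^{1/2}(R^{1/2}MR^{1/2})^kR^{1/2}$ gives $\exp(Z)R=R^{1/2}\exp(\widehat Z)R^{1/2}$ with $S=\widehat Z$ symmetric, combined with the same second-order Taylor truncation for the order (your Gram factorization also shows that $\widehat Z\preceq 0$ is not needed for this step). The induction and the parity split can simply be dropped, and the aside claiming that the even terms sum to $WRW^\top$ is false, since $WRW^\top$ also contains cross terms $(RM)^iR(MR)^j$ with $i\neq j$; this does not affect the argument you actually rely on.
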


\begin{proof}
The correction $\widetilde R$ is the second-order truncation (Taylor approximation) of the matrix exponential, hence
\begin{align*}
\overline R_{\exp}(x,y,h)
&=\exp\left(-\frac{h^2}{12}R\nabla^2\mathcal{H}R\nabla^2\mathcal{H}\right)\bigg|_{(x+y)/2}R\\
&=\left(I-\frac{h^2}{12}R\nabla^2\mathcal{H}R\nabla^2\mathcal{H}+ \mathcal{O}(h^4)\right)\bigg|_{(x+y)/2} R
=\widetilde{R}(x,y,h)+\mathcal{O}(h^4),
\end{align*}
implying the consistency of $\overline R_{\exp}$ with $R$. To establish the structural properties, we introduce
\begin{equation}\label{eq:Z}
\widehat Z(x,h)=-\frac{h^2}{12}R^{1/2}\nabla^2\mathcal{H}(x)R\nabla^2\mathcal{H}(x)R^{1/2} 
\end{equation}
with square root $R^{1/2}$, i.e., $R^{1/2} R^{1/2}=R$ and $R^{1/2}=(R^{1/2})^\top \succeq 0$. Since $R^{1/2} \widehat Z^k R^{1/2}=Z^kR$, $k\in \mathbb{N}_0$, the power series of the exponential yields
\begin{equation*}
\overline R_{\exp}(x,y,h) =R^{1/2}\exp(\widehat Z|_{((x+y)/2,h)})R^{1/2}.
\end{equation*}
As $\widehat Z$ is pointwise symmetric and negative semi-definite, $\exp(\widehat Z)$ is pointwise symmetric positive semi-definite according to the spectral mapping theorem \cite{higham2008}. Consequently,
$\overline R_{\exp}(x,y,h)=\overline R_{\exp}(x,y,h)^{\top}\succeq0$. By construction, the exponential completion yields the desired accuracy and energy consistency of the modified AVF-DGM. 
\end{proof}

The exponential completion is a special case of the congruence-based construction. Setting
\begin{equation*}
\Psi_{\exp}(x,y,h)=\exp\left(\tfrac{1}{2} Z(\tfrac{x+y}{2},h)^\top \right),
\end{equation*}
we obtain
$\overline R_{\exp}= \Psi_{\exp}^\top R\Psi_{\exp}.$
Thus, the exponential completion corresponds to Definition~\ref{def:congruence-completion} with $m=1$ and $\alpha_1=1$. Moreover, it is closely related to the two-term congruence-based completion from Example~\ref{ex:c-b_method}. In the purely dissipative case, $C$ and $D$ reduce to 
\begin{equation*}
        C(x,y)=0, \qquad D(x,y) = -\tfrac{1}{24}\nabla^2\mathcal{H}(\tfrac{x+y}{2})R\nabla^2\mathcal{H}(\tfrac{x+y}{2}) R.
\end{equation*}
The resulting matrix functions $\Psi_{1,2} = I+\tfrac{1}{2}Z^\top$ are second-order truncations of the matrix exponential $\Psi_{\exp}$, i.e., $\Psi_{\exp} = \Psi_{1,2} + \mathcal{O}(h^4)$ as $Z=\mathcal{O}(h^2)$.

\begin{remark}
For general dissipative systems \eqref{eq:system} with $J \neq 0$, the exponential completion requires the dissipation matrix to be invertible, i.e., $R=R^\top \succ 0$, which might be quite restrictive in practice. With $\widetilde R(x,y,h) = R + h^2 T(\tfrac{x+y}{2})$, cf.\ \eqref{eq:modified-AVF}, the exponential completion is then
  \begin{equation*}
        \overline R_{\exp}(x,y,h) = \exp(h^2T(\tfrac{x+y}{2})R^{-1}) \,R, \qquad T(x)= -\tfrac{1}{12}(T_1(x)+T_3(x)).
  \end{equation*}
The connection to the congruence-based completion is given by $\Psi_{\exp}= \exp(\tfrac{h^2}{2}R^{-1}T)$. 
\end{remark}

In practice, the matrix exponential motivates the use of rational functions. We thus investigate rational approximations, in particular Pad\'{e} approximations \cite{baker1996}, with the aim of retaining the structural properties of the exponential completion. Replacing $\exp(Z)$ in \eqref{eq:Rbar-exp} by a rational function $r(Z)$ leads to a \emph{rational completion}, and, in the case of a Pad\'{e} approximant, to a \emph{Pad\'{e}-based completion}.

\begin{theorem}\label{cor:rat_completion}
Let $r \colon \mathbb{R}\to\mathbb{R}$ be a rational function with no poles on $(-\infty,0]$. Suppose that $r(z) = \exp(z)+\mathcal{O}(z^2)$ as $z\to0$ and  $r(z) \geq0$ for all  $z\leq0$.
Then, with $ Z$ given by \eqref{eq:Rbar-exp},
\begin{equation*}
    \overline R_{\mathrm{rat}}(x,y,h)=r(Z(\tfrac{x+y}{2},h))\,R
\end{equation*}
 is a fourth-order symmetric positive semi-definite completion of $\widetilde{R}$ \eqref{eq:Rtilde-pure}.
 \end{theorem}

\begin{proof}
Consider $\widehat{Z}$ from \eqref{eq:Z}. Since $Z(x,h)=R^{1/2} K(x,h)$ and $\hat{Z}(x,h)=K(x,h) R^{1/2}$ with $K(x,h)=-\tfrac{h^2}{12}R^{1/2}\nabla^2 \mathcal{H}(x)R \nabla^2 \mathcal{H}(x)$, the matrices have the same non-zero eigenvalues. Moreover $\widehat{Z}(x,y,h) \preceq  0$, so all eigenvalues of $Z$ are real and non-positive. Since $r$ has no poles on $(-\infty,0]$, both matrix functions $r(Z)$ and $r(\widehat{Z})$ are well-defined, i.e.\ their denominator polynomials $q(Z)$ and $q(\widehat Z)$ are pointwise invertible. The intertwining relation $ZR^{1/2}=R^{1/2}\widehat{Z}$ extends hence from polynomials to rational functions, yielding
$r(Z)R^{1/2}=R^{1/2}r(\widehat{Z})$. Thus,  $\overline R_{\mathrm{rat}}(x,y,h)=\,R^{1/2}r(\widehat{Z}(\tfrac{x+y}{2},h))\,R^{1/2}$.
Since $\widehat Z$ is pointwise symmetric, it admits an orthogonal diagonalization
$\widehat Z=Q\operatorname{diag}(\lambda_1,\ldots,\lambda_d)Q^\top$ with eigenvalues $\lambda_i\leq0$.
The assumption $r(z)\geq 0$ for $z\leq 0$ implies $r(\widehat{Z})=Q\operatorname{diag}(r(\lambda_1),\ldots,r(\lambda_d))Q^\top$ to be pointwise symmetric positive semi-definite. Hence, $\overline R_{\mathrm{rat}}(x,y,h)=\overline R_{\mathrm{rat}}(x,y,h)^\top \succeq 0$.
Finally, since $r(z)=\exp(z)+\mathcal{O}(z^2)$ as $z\rightarrow 0$ and $\widehat Z=\mathcal{O}(h^2)$ as $h\rightarrow 0$, we obtain $r(\widehat Z)=\exp(\widehat{Z})+\mathcal{O}(h^4)$, and therefore
$$\overline R_{\mathrm{rat}} = \overline{R}_{\exp} +\mathcal{O}(h^4) = \widetilde R+\mathcal{O}(h^4).$$
\end{proof}

Let $r_{k,m}$ denote the $[k/m]$-Pad\'{e} approximant of the exponential function, $k$, $m\in \mathbb{N}_0$,
\begin{align}\label{eq:pade-rational}
r_{k,m}(z) &=\frac{N_{k,m}(z)}{D_{k,m}(z)},\\
N_{k,m}(z)&=
\sum_{i=0}^k \binom{k}{i} \frac{(k+m-i)!}{(k+m)!}\,z^i,
\qquad
D_{k,m}(z)=
\sum_{i=0}^m \binom{m}{i} \frac{(k+m-i)!}{(k+m)!}\,(-z)^i.\nonumber
\end{align}
It satisfies $r_{k,m}(z)=\exp(z)+\mathcal{O}(z^{k+m+1})$ as $z \rightarrow 0$ and possesses no poles on $(-\infty, 0]$, since
$D_{k,m}(z)\geq D_{k,m}(0)=1$ for all $z\leq 0$. The following result characterizes the positivity property for the Pad\'e approximants. Their zero distribution has been studied extensively; see, e.g., \cite{saff1975}. Here, we require a more specific characterization of their real zeros.

\begin{lemma}\label{lem:pade-numerator-zeros}
For $k,m\geq0$, the numerator polynomial $N_{k,m}$ has at most one distinct real zero on $(-\infty,0)$. If $k$ is odd, this zero exists and has odd multiplicity. If $k$ is even, $N_{k,m}$ does not change sign on $\mathbb{R}$.
\end{lemma}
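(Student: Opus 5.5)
The plan is induction on $k$ for fixed $m$. It uses two structural facts about $N_{k,m}$: a derivative identity linking $N_{k,m}$ to $N_{k-1,m}$, and the confluent hypergeometric differential equation satisfied by $N_{k,m}$. A preliminary observation is that all coefficients of $N_{k,m}$ are positive and $N_{k,m}(0)=1$. Hence $N_{k,m}>0$ on $[0,\infty)$, and every real zero lies in $(-\infty,0)$. Also, $N_{k,m}$ has degree $k$ with positive leading coefficient.

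\emph{Step 1 (derivative identity).} Differentiate term by term and use $i\binom{k}{i}=k\binom{k-1}{i-1}$. After the shift $j=i-1$ this gives
\begin{equation*}
N_{k,m}'(z)=\frac{k}{k+m}\,N_{k-1,m}(z), \qquad k\geq 1,
\end{equation*}
since $\frac{(k+m-1-j)!}{(k+m)!}=\frac{1}{k+m}\cdot\frac{((k-1)+m-j)!}{(k-1+m)!}$.

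\emph{Step 2 (differential equation).} Check via the coefficient ratio $c_{i+1}/c_i=\frac{(k-i)}{(i+1)(k+m-i)}$ that $N_{k,m}$ is the terminating series ${}_1F_1(-k;-k-m;z)$. It therefore satisfies
\begin{equation*}
z\,N_{k,m}''(z)-(k+m+z)\,N_{k,m}'(z)+k\,N_{k,m}(z)=0 .
\end{equation*}
This is a routine coefficient comparison: the $z^i$ coefficient reduces to $c_{i+1}(i+1)(i-k-m)+c_i(k-i)=0$. Because the series terminates at $i=k$, the formally problematic denominator $-k-m$ never causes trouble, and this also covers $m=0$.

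\emph{Step 3 (induction).} The base case $k=0$ is $N_{0,m}\equiv1$.

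If $k$ is odd, then $k-1$ is even, and by the induction hypothesis $N_{k-1,m}$ does not change sign. Since $N_{k-1,m}(0)=1$, it is nonnegative with only finitely many zeros. By Step 1, $N_{k,m}$ is therefore strictly increasing on $\mathbb{R}$, so it has at most one real zero. That zero exists because the degree is odd and $N_{k,m}\to-\infty$ as $z\to-\infty$. It lies in $(-\infty,0)$ by the preliminary observation. It has odd multiplicity because a strictly increasing polynomial changes sign there.

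If $k\geq2$ is even, then $k-1$ is odd, so $N_{k-1,m}$ has a unique real zero $z_0<0$ with a sign change, negative to the left and positive to the right. By Step 1, $N_{k,m}$ decreases on $(-\infty,z_0]$ and increases on $[z_0,\infty)$, so $z_0$ is its unique global minimizer. In particular $N_{k,m}''(z_0)\geq0$.

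The main obstacle is to show that this minimum value is nonnegative, since the derivative identity alone gives no control of $N_{k,m}(z_0)$. Step 2 supplies exactly this: at $z_0$ we have $N_{k,m}'(z_0)=0$, so
\begin{equation*}
k\,N_{k,m}(z_0)=-z_0\,N_{k,m}''(z_0)\geq0,
\end{equation*}
because $z_0<0$ and $N_{k,m}''(z_0)\geq0$. Hence $N_{k,m}\geq0$ on $\mathbb{R}$, so it does not change sign. By strict monotonicity away from $z_0$, it can vanish only at $z_0$, which gives at most one distinct real zero. This completes the induction.
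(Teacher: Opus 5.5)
Your proof is correct, and it takes a genuinely different route from the paper's.

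The paper works with a single pair $(k,m)$. It rewrites the Kummer equation in self-adjoint form on $z<0$, with $p(z)=|z|^{-(k+m)}e^{-z}>0$ and $q=\tfrac{k}{z}p\le 0$. It then excludes two distinct negative zeros through the Sturm-type identity $\int_{z_1}^{z_2}p\,(N_{k,m}')^2\,\mathrm{d}z=\int_{z_1}^{z_2}q\,N_{k,m}^2\,\mathrm{d}z\le0$. Existence for odd $k$ and even multiplicity for even $k$ follow purely from parity counting, since non-real zeros come in conjugate pairs.

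You instead induct on $k$ through the identity $N_{k,m}'=\tfrac{k}{k+m}N_{k-1,m}$. This fixes the shape of $N_{k,m}$: strictly increasing for odd $k$; for even $k$, strictly decreasing on $(-\infty,z_0]$ and strictly increasing on $[z_0,\infty)$, where $z_0$ is the zero of $N_{k-1,m}$. You use the differential equation only pointwise at $z_0$, to get $kN_{k,m}(z_0)=-z_0N_{k,m}''(z_0)\ge0$.

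The paper's argument needs nothing beyond the ODE for one polynomial and is an instance of a general disconjugacy principle. Yours is more elementary, with no integration by parts and no complex zeros. It also yields more with one extra line. Since $N_{k,m}''(z_0)=\tfrac{k(k-1)}{(k+m)(k+m-1)}N_{k-2,m}(z_0)$, carrying strict positivity through the induction shows that $N_{k,m}>0$ on $\mathbb{R}$ for even $k$ and that the zero for odd $k$ is simple. This is sharper than what the lemma states.
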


\begin{proof}
The numerator can be represented as a Kummer polynomial, $N_{k,m}(z)=M(-k,-k-m,z)$, cf.\ \cite{baker1996},
and thus satisfies the Kummer differential equation
\begin{equation*}
zN^{\prime \prime}_{k,m}(z) -(k+m+z)N_{k,m}^\prime(z)+kN_{k,m}(z)=0.
\end{equation*}
For $z<0$, division by $z$ and multiplication by the integrating factor
$p(z)=|z|^{-(k+m)}\exp(-z)>0$
transform this equation into self-adjoint form,
\begin{equation}\label{eq:pade-self-adjoint}
\bigl(p(z)N_{k,m}^\prime(z)\bigr)^\prime + q(z)N_{k,m}(z) =0,
\end{equation}
where $q(z)=\tfrac{k}{z}p(z)=-k|z|^{-(k+m)-1}\exp(-z) \leq 0$.
Suppose, for contradiction, that $N_{k,m}$ has two distinct zeros $z_1<z_2<0$. Choose two consecutive distinct zeros, so that $N_{k,m}$ does not vanish on $(z_1,z_2)$. Multiplying \eqref{eq:pade-self-adjoint} by $N_{k,m}$ and integrating over $(z_1,z_2)$ gives
\begin{align*}
0 = \int_{z_1}^{z_2} N_{k,m}\bigl(p N_{k,m}'\bigr)'\,\mathrm{d}z + \int_{z_1}^{z_2}qN_{k,m}^2\,\mathrm{d}z \quad 
\Longrightarrow \quad \int_{z_1}^{z_2}p\bigl(N_{k,m}'\bigr)^2\,\mathrm{d}z =  \int_{z_1}^{z_2}q N_{k,m}^2\,\mathrm{d}z\leq 0
\end{align*}
with integration by parts and $N_{k,m}(z_1)=N_{k,m}(z_2)=0$. Since $p>0$, the left-hand side is non-negative. Hence it must vanish. In particular, $N_{k,m}'\equiv 0$ on $(z_1,z_2)$, which, together with $N_{k,m}(z_1)=0$, implies $N_{k,m}\equiv 0$ on $(z_1,z_2)$, contracting the fact that $N_{k,m}$ is a nontrivial polynomial. Hence $N_{k,m}$ has at most one distinct real zero on $(-\infty,0)$. This argument is a special case of the Sturm disconjugacy theorem for self-adjoint equations; see, e.g., \cite{barutello2021}.

Since $N_{k,m}(0)=1$ and all coefficients of $N_{k,m}$ are non-negative, we have $N_{k,m}(z)>0$ for $z\geq0$, and hence all real zeros are exclusively contained in $(-\infty,0)$. For $k=0$,  $N_{0,m}\equiv1$, so there are no real zeros.
Let $k\geq1$. Since $N_{k,m}$ is a real polynomial of degree $k$, its non-real zeros occur in conjugate pairs. Consequently, the total multiplicity of its real zeros has the same parity as $k$. As there is at most one distinct real zero, the following holds: if $k$ is odd, the polynomial must have a real zero $z_0<0$ with odd multiplicity, and $N_{k,m}$ changes sign at $z_0$.
If $k$ is even, any real zero must have even multiplicity. Thus, $N_{k,m}$ does not change sign on $\mathbb{R}$.
\end{proof}

\begin{theorem}\label{thm:pade-positivity}
Let $r_{k,m}$ be the $[k/m]$-Pad\'{e} approximant of the exponential function \eqref{eq:pade-rational}. Then
\begin{equation}\label{eq:pade-positivity} 
r_{k,m}(z)\geq0
\qquad\text{for all }z\leq0
\end{equation}
if and only if $k$ is even.
\end{theorem}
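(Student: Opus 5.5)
The plan is to reduce the sign question for $r_{k,m}$ on $(-\infty,0]$ to a sign question for the numerator $N_{k,m}$, and then read off both directions of the equivalence from Lemma~\ref{lem:pade-numerator-zeros}.

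\emph{Reduction to the numerator.} As noted after \eqref{eq:pade-rational}, $D_{k,m}(z)\geq D_{k,m}(0)=1>0$ for all $z\leq0$. This holds because every coefficient of $(-z)^i$ is non-negative, so each term is non-negative for $z\le 0$. Hence, for $z\leq0$, $r_{k,m}(z)$ has the same sign as $N_{k,m}(z)$. It therefore suffices to show that $N_{k,m}\geq0$ on $(-\infty,0]$ if and only if $k$ is even.

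\emph{Sufficiency ($k$ even).} By Lemma~\ref{lem:pade-numerator-zeros}, $N_{k,m}$ does not change sign on $\mathbb{R}$. Since $N_{k,m}(0)=1>0$, continuity gives $N_{k,m}(z)\geq0$ for all $z\in\mathbb{R}$, and in particular for $z\le 0$. Dividing by $D_{k,m}(z)\geq1$ yields \eqref{eq:pade-positivity}. I expect a subtlety here: "does not change sign" must be read as "sign is constant except at zeros of even multiplicity." This is precisely what the multiplicity statement in the lemma provides. Alternatively, one can argue that $N_{k,m}$ has positive leading coefficient $\tfrac{k!\,m!}{(k+m)!}\cdot\tfrac{1}{k!}\cdot k!>0$ and even degree, so $N_{k,m}(z)\to+\infty$ as $z\to-\infty$. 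Combined with the absence of odd-multiplicity real zeros, this again gives $N_{k,m}\ge0$.

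\emph{Necessity ($k$ odd).} By Lemma~\ref{lem:pade-numerator-zeros}, there is a zero $z_0<0$ of odd multiplicity, so $N_{k,m}$ changes sign at $z_0$. Because $N_{k,m}(0)=1>0$ and $z_0$ is the only real zero, $N_{k,m}>0$ on $(z_0,0]$ and $N_{k,m}<0$ on $(-\infty,z_0)$. This is consistent with odd degree and positive leading coefficient, which give $N_{k,m}(z)\to-\infty$ as $z\to-\infty$. Consequently $r_{k,m}(z)<0$ for every $z<z_0$, so \eqref{eq:pade-positivity} fails. The main obstacle, namely locating and counting the negative real zeros, is already handled by the Sturm-type argument in Lemma~\ref{lem:pade-numerator-zeros}. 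What remains is bookkeeping: confirming that the leading coefficient of $N_{k,m}$ is positive and that $D_{k,m}$ is positive on the negative axis.
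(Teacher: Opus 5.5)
Your proposal is correct and follows the paper's proof essentially verbatim. It reduces to the sign of $N_{k,m}$ via $D_{k,m}\geq 1$ on $(-\infty,0]$, then combines Lemma~\ref{lem:pade-numerator-zeros} with $N_{k,m}(0)=1$ in both directions; the only blemish is in your optional aside, where the leading coefficient of $N_{k,m}$ is $m!/(k+m)!$ rather than $k!\,m!/(k+m)!$, which is harmless since only its positivity is used.
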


\begin{proof}
For $z\leq0$, $D_{k,m}(z)\geq1$, and hence the sign of $r_{k,m}$ is determined by the numerator $N_{k,m}$. In particular, $N_{k,m}(0)=1$ holds for $k,m\in \mathbb{N}_0$.
Let $k$ be even. By Lemma~\ref{lem:pade-numerator-zeros}, $N_{k,m}$ does not change sign on $\mathbb{R}$. Hence, $N_{k,m}(z)\geq0$ for $z\leq0$, yielding \eqref{eq:pade-positivity}.
Conversely, let $k$ be odd, then $N_{k,m}$ has a unique real zero $z_0<0$ of odd multiplicity. Thus, $N_{k,m}(z)<0$ for sufficiently negative $z$, and consequently $r_{k,m}(z)<0$ for some $z<0$. Hence \eqref{eq:pade-positivity} cannot hold.
\end{proof}

Theorem~\ref{thm:pade-positivity} shows that unconditional positive semi-definiteness of the rational completion restricts the numerator degree of the Pad\'{e} approximant to even values. 

\begin{example}
The $[0/1]$-Pad\'{e} approximant  $r_{0,1}$ meets the requirements of Theorem~\ref{cor:rat_completion}, yielding a positive semi-definite completion for arbitrary step sizes $h>0$, i.e.
\begin{equation}\label{eq:Pade01}
r_{0,1}(z) = \left(1-z\right)^{-1} \quad \Rightarrow \quad  \overline{R}_{0,1}(x,y,h) = \left( I- Z\right)^{-1}\big|_{((x+y)/2,h)} \,R.
\end{equation}
The $[0/2]$- and $[2/0]$-Pad\'{e} approximants  are third-order approximations of the exponential function that are positive on the entire real axis,
\begin{equation*}
r_{0,2}(z) = \left(1-z+\tfrac{z^2}{2}\right)^{-1}, \qquad  r_{2,0}(z) = 1+z+\tfrac{z^2}{2},
\end{equation*}
and give rise to the respective completions
\begin{align*}
\overline{R}_{0,2}(x,y,h) &= \left( I- Z + \tfrac{1}{2} Z^2 \right)^{-1}\big|_{((x+y)/2,h)} \,R, \\ 
\overline{R}_{2,0}(x,y,h) &= \left( I+Z + \tfrac{1}{2} Z^2 \right)\big|_{((x+y)/2,h)} \,R. 
\end{align*}
With $\overline{A}=-\overline{R}_{k,m}$, all resulting AVF discrete gradient schemes are fourth-order and energy-consistent for the purely dissipative system \eqref{eq:purely-dissipative}.
The polynomial completion $\overline{R}_{2,0}$  does not offer a decisive computational advantage, since all variants require the solution of a nonlinear system arising from the discrete gradient at each times step. The nonlinear systems differ due to the respective completions; however, the computational costs for the systems with $\overline{R}_{2,0}$  and $\overline{R}_{0,2}$ are comparable. In contrast, the use of  $\overline{R}_{0,1}$ avoids the evaluation of the $Z^2$ term.
\end{example}

\begin{remark}
The diagonal $[1/1]$-Pad\'{e} approximation, also known as the Cayley transform \cite{hairer2006},
\begin{equation*}
r_{1,1}(z)=\left(1-\tfrac{z}{2}\right)^{-1}\,\left(1+\tfrac{z}{2}\right)
\end{equation*}
satisfies the required accuracy condition, but is non-negative only for $z\in[-2,0]$.
Its use requires a step-size restriction ensuring that the spectrum of $Z(x,h)$ remains in $[-2,0]$, and hence does not yield unconditional dissipativity. The same issue arises for the diagonal Pad\'{e} approximants $r_{kk}$ with odd $k$, which are commonly used in numerical software, see, e.g., \cite{almohy2010}.
\end{remark}

\section{Numerical Results}\label{sec:NumericalResults}

In this section, we investigate the two central properties of the proposed methods: energy consistency and fourth-order accuracy. First, we demonstrate the preservation of the dissipative structure for a damped physical pendulum, with particular emphasis on the role of the positive semi-definite completion. Second, we investigate the convergence behavior and computational efficiency of the methods for a dissipative Fermi--Pasta--Ulam (FPU) system.

The implementation is carried out in Python 3.14.0.
For all numerical experiments, the reference solution is computed using {\texttt{scipy.integrate.solve\_ivp}}, the adaptive eighth-order Dormand-Prince method DOP853 with relative and absolute tolerances $10^{-12}$ and $10^{-14}$, respectively. The nonlinear systems arising from the implicit schemes are solved using {\texttt{scipy.optimize.root}}, inexact Newton’s method with Krylov approximations for the inverse Jacobian and a relative error tolerance of $10^{-12}$ between consecutive iterates. All integrals are evaluated to machine precision using Gauss-Legendre quadrature with a degree of exactness sufficient for the respective integrands.

\subsection{Preservation of the dissipative structure} \label{sec:NumericalResults_DissipativeStructure}

As a benchmark, we consider the damped physical pendulum
\begin{equation*}
\dot{x} = \begin{pmatrix}\dot{p} \\ \dot{q}\end{pmatrix} =
\left[ \begin{pmatrix} 0 & -1 \\ 1 & 0 \end{pmatrix} -
\begin{pmatrix} r & 0 \\ 0 & 0 \end{pmatrix} \right]
\begin{pmatrix} p \\ \sin(q) \end{pmatrix},
\end{equation*}
with Hamiltonian $\mathcal{H}(p,q) = \tfrac{1}{2}p^2 + 1-\cos(q)$, damping coefficient $r>0$, time interval $[0,T]$ and initial value $x(0)=(0.5,\pi/2)^\top$ unless stated otherwise. 
The simplicity of this system allows us to characterize analytically the condition under which the uncompleted fourth-order correction $\widetilde R$ loses positive semi-definiteness and to derive a state-independent critical step size. This provides a convenient setting to investigate the role of the congruence-based completion and its impact on the dissipative structure.

\begin{figure}[tb]
    \centering
    \includegraphics{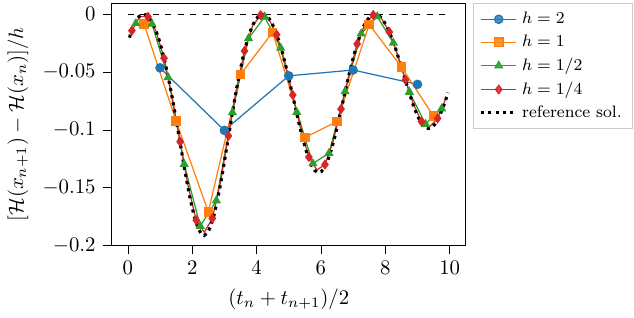}
    \caption{Damped physical pendulum, $r=0.1$. Discrete energy derivative of \texttt{DGM4} for different step sizes $h$, compared with reference solution.}
    \label{fig:pendulum_dissipation-test}
\end{figure}

\begin{figure}[b]
    \centering
    \includegraphics{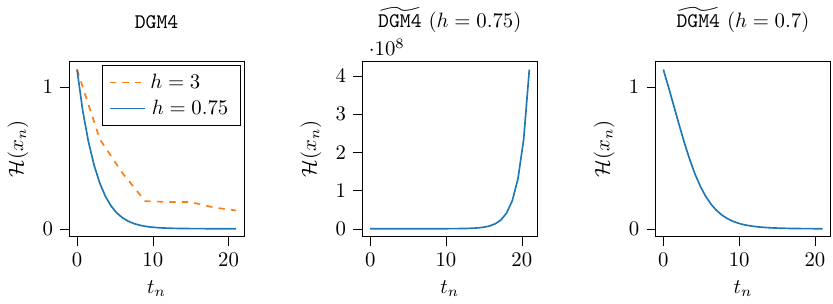}
    \caption{Damped physical pendulum, $r=5$. Energy behavior over time for \texttt{DGM4} with several $h$ (left) and for $\widetilde{\mathtt{DGM4}}$ with $h=0.75$ (center) and $h=0.7$ (right).}
    \label{fig:instabilities}
\end{figure}

As a representative example, we compare here the fourth-order energy-consistent AVF discrete gradient scheme from \eqref{eq:DGM4} (\texttt{DGM4}) with the uncompleted variant ($\widetilde{\mathtt{DGM4}}$), obtained by using $\widetilde{A}$ \eqref{eq:Atilde-decomposition}.
We first verify the energy consistency of \texttt{DGM4}. For the damped pendulum with $r=0.1$, $T=10$ and various step sizes $h \in \{1/4,1/2,1,2\}$, Figure~\ref{fig:pendulum_dissipation-test} illustrates that the discrete dissipation inequality
\begin{equation*}
\tfrac{1}{h}(\mathcal{H}(x_{n+1})-\mathcal{H}(x_n))\leq 0
\end{equation*}
holds for every trajectory and step size. Already for $h=1/4$, the numerical energy decay closely follows the reference solution. Thus, the proposed fourth-order modification preserves the dissipative structure independently of the step size. The unconditional dissipativity implies unconditional stability of the equilibrium under the stated assumptions. In contrast, methods that do not satisfy the discrete dissipation inequality may exhibit numerical energy growth and, consequently, qualitatively incorrect dynamics.

The role of the completion becomes here apparent for stronger dissipation. 
In \texttt{DGM4} the dissipation matrix function is pointwise positive semi-definite,
\begin{equation*}
\overline{R} = r \begin{pmatrix}
\left(1 + \frac{h^2}{24} (2 \cos(q) - r^2) \right)^2 & 0 \\
0 & \frac{h^2}{12}
\end{pmatrix} \succeq 0, \qquad \text{for every } h>0,
\end{equation*}
whereas $\widetilde{R}$ of $\widetilde{\mathtt{DGM4}}$ 
\begin{equation*}
\widetilde{R}
= r\begin{pmatrix}
1 + \frac{h^2}{12} (2 \cos(q) - r^2) & 0 \\
0 & \frac{h^2}{12}
\end{pmatrix}
\end{equation*}
can become indefinite for sufficiently large step sizes. While the second eigenvalue $\lambda_2$ of $\widetilde{R}$ is always positive, i.e., $\lambda_2=r h^2/12 > 0$, the first one $\lambda_1$ may change the sign.  Let $r>\sqrt{2}$, then 
\begin{align}\label{eq:h-crit}
\lambda_1=r (1+\frac{h^2}{12}(2\cos(q)-r^2))< 0 \quad \text{ for } \quad h> \sqrt{\frac{12}{r^2-2\cos(q)}}.
\end{align}
A critical state-independent step-size threshold for stability is hence $h_{\mathtt{crit}}=\sqrt{{12}/({r^2-2})}$.
The loss of positive semi-definiteness is reflected directly in the numerical dynamics as shown in Figure~\ref{fig:instabilities} for $r=5$. The numerical solution of $\widetilde{\mathtt{DGM4}}$ is stable for $h=0.7<h_{\mathtt{crit}}\approx 0.7223$ but exhibits numerical instability for $h=0.75>h_{\mathtt{crit}}$. In contrast,  \texttt{DGM4} remains dissipative even for large step sizes, see, e.g., $h=3$.

\begin{figure}[tb]
    \centering
    \includegraphics[width=\textwidth]{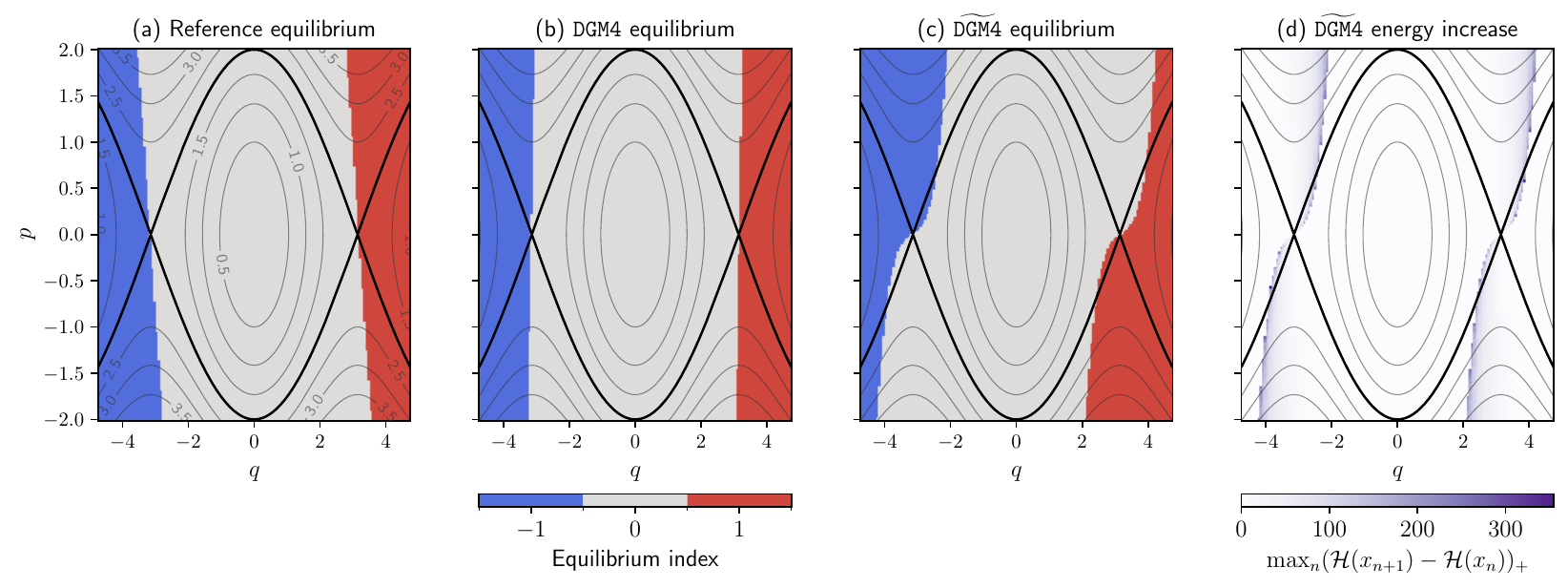}
\caption{
Damped physical pendulum, $r=5$, $t\in[0,21]$ and $h=0.7$. Grey curves indicate level sets of $\mathcal H$, and the black curve the separatrix $\mathcal H=2$.
Panels (a)-(c) show the index $j$ of the attracting equilibrium $(p,q)^\star=(0,2\pi j)$ associated with each initial value:
(a) reference solution, 
(b) \texttt{DGM4}, 
(c) $\widetilde{\mathtt{DGM4}}$. 
Panel (d) shows the largest positive one-step energy increment for $\widetilde{\mathtt{DGM4}}$.
}
\label{fig:basins}
\end{figure}

The phenomenon can also be observed globally in the state space. 
Consider an equidistant $150\times150$ grid of initial values $(p,q)(0)\in[-3\pi/2,3\pi/2]\times[-2,2]$ and classify each trajectory according to the index $j$ of the equilibrium $(p,q)^\star=(0,2\pi j)$ to which it converges. The resulting basins of attraction are compared with the reference solution in Figure~\ref{fig:basins} for $r=5$ and $h=0.7$.
For \texttt{DGM4}, the correct attracting equilibrium is obtained for $94.9\%$ of the initial values, and the dissipation inequality is satisfied throughout the experiment. In contrast, $\widetilde{\mathtt{DGM4}}$ identifies the correct equilibrium for only $79.4\%$ of the initial values, and $50.7\%$ of its trajectories exhibit at least one positive one-step energy increment due to the indefinite $\widetilde R$, when $2\cos(q)<r^2-12/h^2$, cf.\ \eqref{eq:h-crit}. The completion is therefore essential not only for guaranteeing unconditional dissipation, but also for preserving the qualitative behavior of the computed dynamics.

\subsection{Accuracy and computational efficiency}

As second benchmark, we consider a dissipative modification of the Fermi--Pasta--Ulam (FPU) system \cite{galgani1992},
\begin{align*}
\dot x = \begin{pmatrix} \dot{p} \\ \dot{q} \end{pmatrix} &=
\left[\begin{pmatrix} 0 & -I \\ I & 0 \end{pmatrix}-\begin{pmatrix} r I & 0 \\ 0 & 0 \end{pmatrix}\right]
\begin{pmatrix} \nabla_p \mathcal{H}(p,q) \\ \nabla_q \mathcal{H}(p,q) \end{pmatrix},\\
\mathcal{H}(p,q) &=\frac{p^\top p}{2}+ \frac{\omega^2}{4}\sum_{i=1}^m (q_{2i}-q_{2i-1})^2+ \sum_{i=0}^m (q_{2i+1}-q_{2i})^4, \label{eq:FPU_Hamiltonian}
\end{align*}
with state $x(t)\in\mathbb{R}^{4m}$, $t\in [0,T]$, damping coefficient $r>0$, spring coefficient $\omega >0$, and boundary values $q_0=q_{2m+1}=0$.
The system describes a chain of $2m$ mass points connected by alternating linear and nonlinear springs and fixed at both ends. Its scalable dimension makes it a suitable benchmark for evaluating the computational efficiency of the  fourth-order approaches.

We assess the accuracy and computational cost of the proposed energy-consistent discrete gradient methods and particularly compare their performance with the higher-order energy-consistent Petrov--Galerkin method of \cite{giesselmann2024} with $k=2$ (\texttt{PGM}) and the standard second-order AVF-DGM \eqref{eq:DGM}-\eqref{eq:AVF_DG} (\texttt{DGM2}). We consider here the AVF discrete gradient scheme with congruence-based completion from \eqref{eq:DGM4} (\texttt{DGM4}) and the Pad\'e-based completion \eqref{eq:Pade01} embedded in the commutator-based splitting method of \cite{moench2026} (\texttt{Split}). The splitting scheme is based on an energy-associated decomposition of the system into a purely dissipative and an energy-conserving subproblem, which are solved using the Pad\'e-based completion $\overline{R}_{0,1}$ and \texttt{DGM4} for $R=0$, respectively. i.e.
\begin{equation*}
\Phi_h = \varphi_{h/6}^{{R}} \circ
\varphi_{h/2}^{{\hat{J}}} \circ
\varphi_{2h/3}^{{R}} \circ
\varphi_{h/2}^{{\hat{J}}} \circ
\varphi_{h/6}^{{R}},
\end{equation*}
where $\varphi^{{R}}$  and $\varphi^{{\hat{J}}}$ denote the numerical flows of
\begin{align*}
\dot{x} &= -R\nabla\mathcal{H}(x),\qquad \quad 
\dot{x} =
\hat J \nabla\mathcal{H}(x), \,\, \hat J=J-\frac{h^2}{72}
\big[
R\nabla^2\mathcal{H}(x),
[R\nabla^2\mathcal{H}(x),J]
\big]=\mathrm{const}.
\end{align*}
The commutator-based skew-symmetric matrix $\hat J$ is here constant despite the state dependence of the Hessian due the underlying diagonal block structure of $R$.  Moreover, $[A,B] = AB-BA$ is the matrix commutator. 

\begin{figure}[tb]
    \centering
    \begin{subfigure}[b]{0.475\textwidth}
      \centering
        \includegraphics{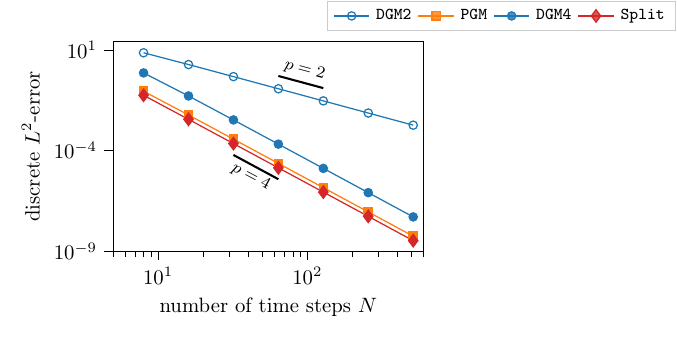}
    \end{subfigure}
    \hfill
    \begin{subfigure}[b]{0.475\textwidth}
        \centering
        \includegraphics{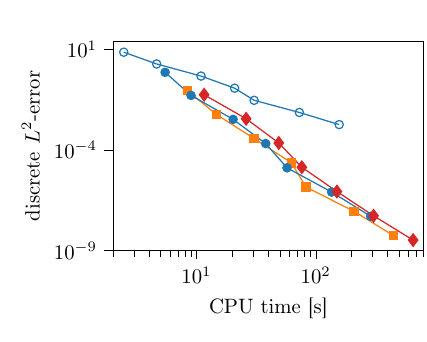}
    \end{subfigure}
    \vspace*{-0.3cm}
    \caption{Dissipative FPU system, $r=0.1$, $\omega=1$, $T=2$ and random initialization $x(0) \sim (\mathcal{U}(0,1))^{4m}$. Discrete $L^2([0,T])$-error over number of time steps $N=T/h$ (left) and over CPU-time (right) for $m=25\,000$.}
    \label{fig:FPU_results}
\end{figure}

The numerical results in Figure~\ref{fig:FPU_results}(left) confirm the theoretical convergence order of the schemes: 
\texttt{DGM4}, \texttt{Split}, and \texttt{PGM} converge with order $p=4$, while the baseline method \texttt{DGM2} is second-order accurate.
In this benchmark, the splitting scheme yields the smallest error constant, whereas \texttt{DGM4} exhibits the largest among the fourth-order methods.
Regarding computational cost, the work-precision diagram in Figure~\ref{fig:FPU_results}(right) shows that the fourth-order discrete gradient schemes are competitive with the established Galerkin method \texttt{PGM}. Moreover all fourth-order schemes outperform \texttt{DGM2} in terms of computational efficiency despite the additional costs of  evaluating the higher-order (correction) terms. 
The work-precision results shown in Figure~\ref{fig:FPU_results}(right) correspond to $m=25\,000$, i.e. a system dimension of $4m=100\,000$. Similar results are obtained for other values of $m$, with the computational cost increasing with the system size.
Since $J$ and $R$ are sparse in this benchmark, the energy-associated decomposition underlying the splitting scheme provides limited structural savings and therefore does not lead to a substantial efficiency advantage over \texttt{DGM4}. 
However, the splitting approach may become more advantageous for systems in which the conservative and dissipative operators exhibit substantially different computational characteristics, for example, when one is sparse and the other dense, when one is constant and the other state-dependent, or when the corresponding dynamics evolve on different time scales.

Taken together, the two benchmarks demonstrate that the modified discrete gradient methods are unconditionally dissipative, reproduce the correct qualitative behavior where the uncompleted correction fails, and achieve fourth-order accuracy at a computational cost competitive with existing fourth-order alternatives.

\section{Conclusion and Outlook}\label{sec:Conclusion}

In this work, we developed fourth-order energy-consistent AVF discrete gradient methods for dissipative Hamiltonian systems. By combining the perturbation principle with a positive semi-definite completion of the dissipation matrix, the resulting methods retain the dissipative structure while achieving fourth-order accuracy. For purely dissipative systems, we developed an alternative exponential completion and structure-preserving rational approximations that can be embedded in energy-associated splitting schemes.
Numerical experiments for the damped physical pendulum and a dissipative Fermi--Pasta--Ulam system confirm the expected convergence orders and energy-consistency and demonstrate the computational competitiveness of the proposed methods relative to established Galerkin approaches.
The introduced concept of congruence-based completion directly enables the systematic construction of higher-order energy-consistent schemes for arbitrary orders ($p>4$). Beyond this direct extension, the framework opens several directions for further research, including its application to other classes of discrete gradients, dissipative Poisson systems with state-dependent structure matrices, and port-Hamiltonian systems with external inputs or algebraic constraints


\end{document}